\title{Numerical Investigation of a Neural Field Model Including Dendritic Processing} 
\author{%
  Daniele Avitabile%
  \thanks{%
    Vrije Universiteit Amsterdam,
    Department of Mathematics,
    Faculteit der Exacte Wetenschappen,
    De Boelelaan 1081a,
    1081 HV Amsterdam, The Netherlands.
  \protect\\
    Inria Sophia Antipolis M\'editerran\'ee Research Centre,
    MathNeuro Team,
    2004 route des Lucioles-Boîte Postale 93 06902,
    Sophia Antipolis, Cedex, France.
  \protect\\
    (\email{d.avitabile@vu.nl}, \url{www.danieleavitabile.com}).
  }
  \and
  Stephen Coombes \thanks{Centre for Mathematical Medicine and Biology, School of
  Mathematical Sciences, University of Nottingham, NG7 2RD, UK.}
  \and
  Pedro M. Lima \thanks{CEMAT, Instituto Superior Tecnico,University of Lisbon,
  Portugal}
}
\newcommand{\Real}{\operatorname{Re}}    
\newcommand{\Imag}{\operatorname{Im}}  
\newcommand{\Cb}{\mathbb{C}}
\newcommand{\Rb}{\mathbb{R}}
\newcommand{\Nb}{\mathbb{N}}
\newcommand{\Zb}{\mathbb{Z}}
\newcommand{\Ib}{\mathbb{I}}
\newcommand{\cF}{\mathcal{F}}
\newcommand{\diff}{\mathop{}\!\mathrm{d}}
\newcommand{\ep}{\varepsilon}
\newcommand{\otoprule}{\midrule[\heavyrulewidth]}
\definecolor{LightGrey}{rgb}{0.9629411,0.9629411,0.9629411}
\definecolor{LighterGrey}{gray}{0.99}
\definecolor{Mauve}{rgb}{0.58,0,0.82}
\definecolor{Emerald}{rgb}{0.31, 0.78, 0.47}
\definecolor{RoyalBlue}{rgb}{0.25, 0.41, 0.88}
\definecolor{myGreen}{cmyk}{0.82,0.11,1,0.25}
\begin{document}

\maketitle

\begin{abstract}
We consider a simple neural field model in which the state variable is dendritic
voltage, and in which somas form a continuous one-dimensional layer.
This \textit{neural field} model with \textit{dendritic processing} is formulated as
an integro-differential equation. We introduce a
computational method for approximating solutions to this nonlocal model, and use
it to perform numerical simulations for neuro-biologically realistic choices of
anatomical connectivity and nonlinear firing rate function.  For the time
discretisation we adopt an Implicit-Explicit (IMEX) scheme; the space discretisation
is based on a finite-difference scheme to approximate the diffusion term and uses the
trapezoidal rule to approximate integrals describing the nonlocal interactions in
the model. We prove that the scheme is of first-order in time and second order in
space, and can be efficiently implemented if the factorisation of a small, banded
matrix is precomputed. By way of validation we compare the outputs of a numerical
realisation to theoretical predictions for the onset of a Turing pattern, and to the
speed and shape of a travelling front for a specific choice of Heaviside firing rate.
We find that theory and numerical simulations are in excellent agreement.

\end{abstract}

\section{Introduction}

Ever since Hans Berger made the first recording of the human electroencephalogram
(EEG) in 1924 there has been a tremendous interest in understanding the physiological
basis of brain rhythms. This has included the development of mathematical models of
cortical tissue, which are often referred to as neural field models.  
The formulation of these models has not changed much since the seminal work of Wilson
and Cowan, Nunez and Amari in the 1970s, as recently described in \cite{Coombes2014}.
Neural fields and neural mass models approximate neural activity assuming the
cortical tissue is a continuous medium. They are coarse-grained spatiotemporal models,
which lack important physiological mechanisms known to be fundamental in generating brain
rhythms, such as dendritic structure and cortical folding. Nonetheless their basic
structure has been shown to provide a mechanistic starting point for understanding
whole brain dynamics, as described by Nunez \cite{Nunez1995}, and especially that of
the EEG.

Modern biophysical theories assert that EEG signals from a single scalp
electrode arise from the coordinated activity of $\sim 10^8$ pyramidal cells in the
cortex.  These are arranged with their dendrites in parallel and perpendicular to the
cortical surface. When activated by synapses at the proximal
dendrites, extracellular current flows parallel to the dendrites, with a net
membrane current at the synapse. For excitatory (inhibitory) synapses this creates a
sink (source) with a negative (positive) extracellular potential.  Because there is
no accumulation of charge in the tissue the proximal synaptic current is compensated
by other currents flowing in the medium causing a distributed source in the case of a
sink and vice-versa for a synapse that acts as a source. Hence, at the population
level the potential field generated by a synchronously activated population of
cortical pyramidal cells behaves like that of a dipole layer.  Although the important
contribution that single dendritic trees make to generating extracellular electric
field potentials has been known for some time, and can be calculated using
Maxwell's equations \cite{Pettersen08}, they are often not accounted for in neural
field models.  However, with the advent of laminar electrodes to record from
different cortical layers it is now timely to build on early work by Crook and
coworkers \cite{crook1997role} and by Bressloff, reviewed in \cite{Bressloff97}, and develop neural field models that incorporate a
notion of dendritic depth.  This will allow a significant and important departure
from present-day neural field models, and recognise the contribution of dendritic
processing to macroscopic large-scale brain signals. A simple way to generalise
standard neural field models is to consider the dendritic cable model of Rall as the
core component in a neural field, with source terms on the cable mediating
long-range synaptic interactions.  These in turn can be described with the
introduction of an \textit{axo-dendritic} connectivity function.

Here we consider a neural field model which treats the voltage on a dendrite as the
primary variable of interest in a simple model of neural tissue. The model comprises
a continuum of somas (a \emph{somatic layer}, see schematic in
Figure~\ref{fig:sketch}(a)). Dendrites are modeled as unbranched fibres,
orthogonal to the somatic layer which, for simplicity, is one-dimensional and
rectilinear (see Figure~\ref{fig:sketch}(b)). At each point along the somatic layer
$x \in \Rb$ we envisage a fibre with coordinate $\xi \in \Rb$. The voltage dynamics
along the fibre is described by the cable equation, with a nonlocal input current
arising as
an integral
over the outputs from the somatic layer (where $\xi=0$). Denoting the voltage by
$V(x,\xi,t)$ we have an integro-differential equation for the
real-valued function $V:  \Rb^2 \times \Rb  \rightarrow \Rb$ of the form
  \begin{multline} \label{1}
  \partial_t V(x,\xi, t) = (-\gamma + \nu \partial_{\xi \xi}) V(x,\xi,t)
                        + G(x,\xi,t)
			\\
  + \int_{\Rb^2} W(x,\xi,y,\eta) 
                        S(V(y,\eta,t))\diff y \diff \eta ,
  \end{multline}
posed on $(x,\xi,t) \in \Rb^3$, for some typically sigmoidal or
Heaviside-type  firing rate function $S$, and some external input function $G$. Here
$\nu$ is the diffusion coefficient and $1/\gamma$ the membrane time-constant of the
cable. As we shall see below, it will be crucial for our analysis that currents flow
exclusively along the fibres, that is, the diffusive term in \eqref{1} contains
derivatives only with respect to $\xi$.

The model is completed with a choice of the generalised axo-dendritic connectivity
function $W$. The nonlocal input current arises from the somatic layer, hence they
are transferred from sources in an $\ep$-neighbourhood of $\xi = 0$, $0 < \ep \ll 1$, to 
contact points in an $\ep$-neighbourhood of $\xi=\xi_0$ on the cable (see
Figure~\ref{fig:sketch}(b)). In addition, the strength of interaction depends
solely on the distance between the source and the contact point, measured along the
somatic layer, leading to the decomposition
\begin{equation}\label{eq:kernel}
  W(x,\xi,y, \eta)=    w(|x-y|)\delta_\ep(\xi-\xi_0) \delta_\ep(\eta),
\end{equation}
where $w$ describes the strength of interaction across the somatic space and is
chosen to be translationally invariant and $\delta_\ep$ is a quickly-decaying
function. 

\begin{figure}
  \centering
  \includegraphics{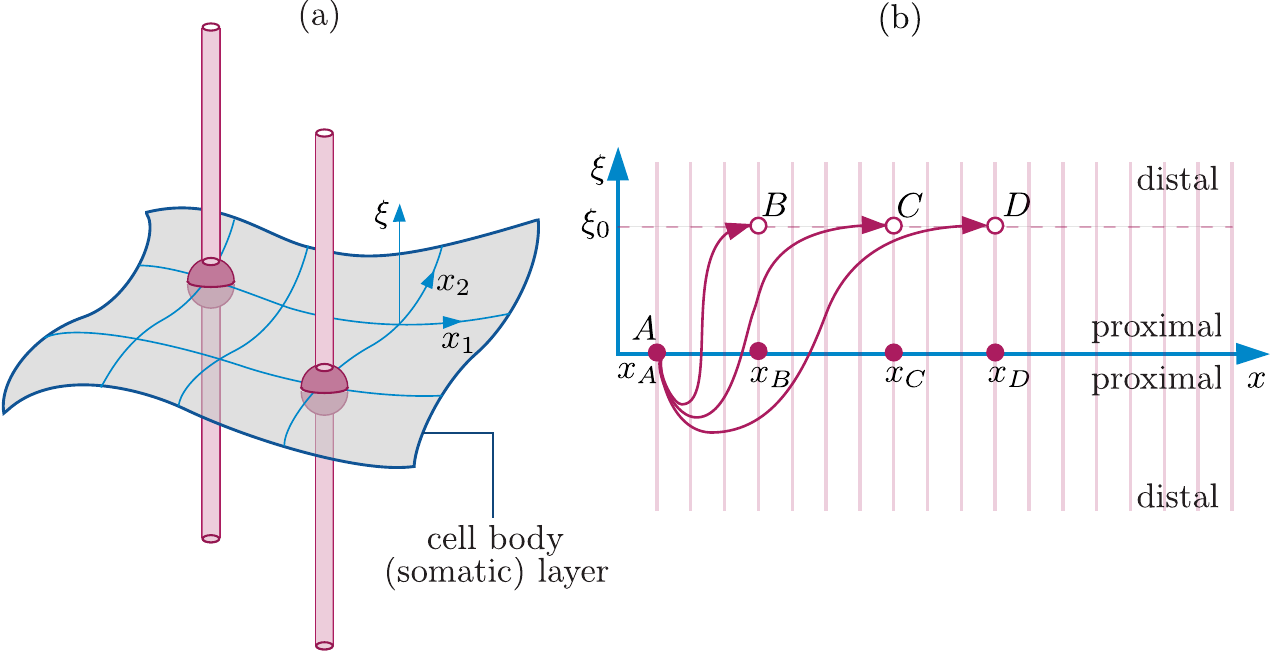}
  \caption{Schematic of the neural field model. (a) Dendrites are represented as
    unbranched fibres (red), orthogonal to a continuum of somas (somatic layer, in
    grey). (b) Model
    \eqref{1} is for a 1D somatic layer, with coordinate
$x \in \Rb$, and fiber coordinate $\xi \in \Rb$. Input currents are generated in a
small neighbourhood of the somatic layer, at $\xi=0$ and are delivered to a contact
point, in a small neighbourhood of $\xi = \xi_0$. The strength of interaction depends
on the distance between sources and contact points, measured along the somatic layer,
hence the inputs that are generated at $A$ and transmitted to $B$, $C$, and $D$ depend
on $|x_B - x_A|$, $|x_C-x_A|$, and $|x_D - x_A|$, respectively (see
\eqref{eq:kernel}).}
  \label{fig:sketch}
\end{figure}

This work introduces a computational method for approximating
solutions to (\ref{1}), subject to suitable initial and boundary conditions,
and applies it to the numerical simulation of the model with kernel given by
(\ref{eq:kernel}). Numerical methods for neural fields in 2-dimensional media have
been introduced recently in flat
geometries~\cite{Rankin2014,hutt2014numerical,lima2015numerical} and on 2-manifolds
embedded in a 3-dimensional space~\cite{Bojak2010,Visser:2017hy}. In addition,
several available open-source codes, such as the Neural Field
Simulator~\cite{Nichols2015}, the Brain Dynamics Toolbox~\cite{Heitmann}, and
NFTsim~\cite{SanzLeon2018}, perform simulations of neural field equations. Numerical
schemes for models of type~\eqref{1} have not been introduced, analysed, or
implemented, and these are the main contributions of the present article.

In Section \ref{Sec:numerical} we describe, analyse, and discuss implementation
details of the numerical method. In Sections \ref{sec:TWTest}--\ref{sec:TuringTest}
we illustrate the performance of the method by means of some numerical experiments,
including problems whose exact solution has known properties. The numerical results
are discussed and their physical meaning is explained. We finish with some
conclusions and discussion in Section \ref{sec:conclusions}.

\section{Numerical Scheme}
\label{Sec:numerical}
Numerical simulations are performed on \eqref{1}, posed on a
bounded, cylindrical somato-dendritic  domain 
\[
  \Omega = \Rb/2L_x\Zb \times (-L_\xi,L_\xi), 
\]
and subject to initial and boundary conditions,
\begin{equation}\label{eq:systemNum}
  \begin{aligned}
    & \partial_t V  = (-\gamma + \nu \partial_{\xi \xi}) V + K(V) + G
    & & \textrm{on $ \Omega \times (0,T]$}, \\
  &  V(\blank,\blank,0)  = V_0
    & & \textrm{on $ \Omega$}, \\
  &  \partial_\xi V(\blank,-L_\xi,\blank) = \partial_\xi V(\blank,L_\xi,\blank) = 0
    & & \textrm{on $ (-L_x,L_x] \times [0,T]$}, \\
  \end{aligned}
\end{equation}
for some positive constants $T$, $L_x$, $L_\xi$. This setup implies
$2L_x$-periodicity in the somatic
direction, and Neumann boundary conditions in the dendritic direction.
We denote by $K$ the integral operator defined by
\[
  (K(V))(x,\xi,t) = \int_{\Omega} W_{\Omega}(x,\xi,y,\eta)
    S(V(y,\eta,t)) \diff y \diff \eta,
  \qquad (x,\xi) \in \Omega.
\]
where $W_{\Omega}$ is the restriction of $W$ on $\Omega$. This restriction implies
that the function $w$ in \eqref{eq:kernel} be substituted by
its periodic extension on $[-L_x,L_x)$. In the remainder of this paper we will omit the
subscript $\Omega$ from $W$, and assume $w$ to be $2L_x$-periodic.

To expose our scheme we introduce
a spatiotemporal discretisation 
using the evenly spaced grid 
$\{(x_j,\xi_i,t_n)\}$ defined by
\[
  \begin{aligned}
    & x_j = -L_x + j h_x, & & j \in \Nb_{n_x}, && h_x = 2L_x/n_x, \\
    & \xi_i   = -L_\xi + (i-1) h_\xi, & & i \in \Nb_{n_\xi}, && h_\xi =
    2L_\xi/(n_\xi-1), \\
    & t_n = n \tau, & & n \in \Nb_{n_t}, && \tau = T/n_t, \\
  \end{aligned}
\]
where we posed $\Nb_k = \{1,2,\ldots,k\}$ for $k \in \Nb$. The scheme we propose uses
the method of lines for \eqref{eq:systemNum}, in conjunction with differentiation
matrices for the diffusive term and a quadrature scheme for the integral operator.

%
Collocating \eqref{eq:systemNum} at the somato-dendritic nodes we obtain
\begin{equation}\label{eq:collocation}
  \begin{split}
    \partial_t V(x_j,\xi_i,t) = (-\gamma + \nu \partial_{\xi\xi}) V(x_j,\xi_i,t) 
   & + K(V)(x_j, \xi_i,t) \\
   & + G(x_j,\xi_i,t) \quad
  (j,i) \in \Nb_{n_x} \times \Nb_{n_\xi},
  \end{split}
\end{equation}
where, with a slight abuse of notation, we denote by $V$ an interpolant to the
function $V$ in \eqref{eq:systemNum} through $\{ (x_j,\xi_i) \}$.
To obtain a numerical solution of the problem we
must choose: (i) an approximation for the linear operator $(-\gamma + \nu
\partial_{\xi\xi})$ at the somato-dendritic nodes; (ii) an approximation for the
integral operator at the same nodes; (iii) a scheme to time step the derived set of
ODEs. 

In the presentation of the scheme, we shall use two equivalent representations for
the voltage approximation: a matricial description 
\begin{equation}\label{eq:VMatrix}
  V(t) = \{ V_{ij}(t) 
  \colon (i,j) \in \Nb_{n_\xi} \times \Nb_{n_x} \} 
    \in \Rb^{n_\xi \times n_x}, \qquad V_{ij}(t) \approx
  V(x_j,\xi_i,t), 
\end{equation}
and a lexicographic vectorial representation, obtained by introducing the
index mapping $k(i,j) = n_\xi(i-1) + j$,
\begin{equation}\label{eq:VVector}
  U(t) = \{ U_{k(i,j)}(t) \colon (i,j) \in \Nb_{n_\xi} \times \Nb_{n_x} \} 
  \in \Rb^{n_x n_\xi}.
\end{equation}
In the latter, we will sometimes suppress the dependence of $k$ on $(i,j)$, for
notational convenience.

\subsection{Discretisation of the linear operator} A simple choice for discretising
the linear differential operator $(-\gamma + \nu \partial_{\xi \xi})$ is to adopt
differentiation matrices~\cite{trefethen2000}. If a differentiation matrix
$D_{\xi \xi} \in \Rb^{n_\xi \times n_\xi}$ is chosen to approximate the action of
the Laplacian operator $\partial_{\xi \xi}$ on twice differentiable, univariate
functions defined on $[-L_\xi,L_\xi]$, satisfying Neumann boundary conditions, and
sampled at nodes $\{ \xi_i \}$, then the action of the operator $-\gamma + \nu
\partial_{\xi\xi}$ on bivariate functions defined on $[-L_x,L_x) \times
[-L_\xi,L_\xi]$, twice differentiable in $\xi$ with Neumann boundary conditions,
sampled at the nodes $\{ (x_j,\xi_i) \}$ with lexicographical ordering $k(i,j)$ is
approximated by the following block-diagonal matrix
\begin{equation*}\label{eq:LinOp}
  -\gamma I_{n_x n_\xi} + \nu I_{n_x} \otimes D_{\xi \xi} = 
  \begin{bmatrix}
    -\gamma + \nu D_{\xi \xi} &                          &        &               \\
                             &-\gamma + \nu D_{\xi \xi} &        &               \\
                             &                          & \ddots &               \\
                             &                          &        &-\gamma + \nu D_{\xi \xi}     \end{bmatrix}
  ,
\end{equation*}
where $I_n$, $n \in \Nb$, is the $n$-by-$n$ identity matrix, and $\otimes$ is the
Kronecker product between matrices. Since the model prescribes diffusion only along
the dendritic coordinate, the
corresponding matrix has a block-diagonal structure \emph{with identical blocks},
which can be exploited to improve performance in numerical computations. The sparsity pattern of a block is dictated by the
underlying scheme to approximate the univariate Laplacian: we have full blocks if
$D_{\xi\xi}$ is derived from spectral schemes, and sparse blocks
for finite-difference schemes. 

\subsection{Discretisation of the nonlinear integral operator} The starting point to
discretise the integral operator is an $m$th order quadrature formula with $q_m$
nodes $\{ (y_l,\eta_l) \colon l \in \Nb_{q_m} \}$ and weights $\{ \sigma_l \colon l
\in \Nb_{q_m} \}$ for the integral of a bivariate function over $\Omega$,
\[
Q(v)=\int_{\Omega} v(y,\eta) \, \diff y  \diff \eta \approx
\sum_{l \in \Nb_m} v(y_l,\eta_l) \sigma_l = Q_m(v).
\]
Using this formula we approximate the nonlinear operator in \eqref{eq:collocation} by
\[
  Q_m(K(V))(x_j,\xi_i,t) = \sum_{l \in \Nb{q_m}} W(x_j,\xi_i,y_l,\eta_l)
  S(V(y_l,\eta_l,t)) \sigma_l .
\]

We stress that, in general, the quadrature nodes $\{ (y_l,\eta_l) \}$ and the
collocation nodes $\{ (x_{k(i,j)}, \xi_{k(i,j)}) \}$ are disjoint. The former are
chosen so as to approximate accurately the integral term, the latter to approximate
the differential operator. When the two grids are disjoint, an interpolation of $V$ with
nodes $\{ (y_l,\eta_l) \}$ is necessary to derive a set of ODEs at the collocation
nodes. In the remainder of this paper we will assume that collocation and quadrature
nodes coincide, so that we can omit the interpolant, for simplicity.

\subsection{Matrix ODE formulation}
Combining the differentiation matrix, the quadrature rule, and the lexicographic
representation \eqref{eq:VVector}
we obtain a set of $n_x n_\xi$ ODEs
\begin{equation}\label{eq:ODEs}
  \begin{aligned}
    \dot U(t) & = (-\gamma I_{n_x n_\xi} + \nu I_{n_x} \otimes D_{\xi \xi} ) U(t) +
  F(U(t),t), \\
  U(0) & = U_0.
  \end{aligned}
\end{equation}
The structure of the differentiation matrix in section \eqref{eq:LinOp}, however,
suggests a rewriting of \eqref{eq:ODEs} in terms of the blocks of the linear
operator, which correspond to ``slices" at constant values of $x$: we recall the
matrix representation \eqref{eq:VMatrix} and obtain an equivalent
matrix ODE formulation
\begin{equation}\label{eq:MatrixODE}
  \dot V(t) = (-\gamma I_{n_\xi} + \nu D_{\xi \xi}) V(t) + N(V(t)) + G(t),
\end{equation}
where $N$ is the matrix-valued function with components $N_{ij}(V) =
Q_m(V)(x_j,\xi_i)$ and $G$ is the matrix with components $G(x_j,\xi_i,t)$. In passing,
we note that the linear part of the equation involves
a multiplication between an $n_\xi$-by-$n_\xi$ matrix and the $n_\xi$-by-$n_x$ matrix $V$. 

\subsection{Time-stepping scheme} The proposed time-stepping scheme for
\eqref{eq:systemNum} is obtained from \eqref{eq:MatrixODE} with the following
choices: (i) a first-order, implicit-explicit (IMEX) time-stepping
scheme~\cite{ascher1995}; (ii) a second-order, centered finite-difference
scheme for the differentiation matrix $D_{\xi \xi}$; (ii) a second-order trapezium
rule for the quadrature rule $Q_m$. As we shall see, these choices bring
a few computational advantages, which will be outlined below. 

IMEX schemes treat the linear (diffusive) part of the ODE implicitly, and the
nonlinear part explicitly, so that the stiff diffusive term is integrated implicitly
to avoid excessively small time steps. The simplest IMEX method uses backward Euler
for the diffusive term, leading to
\begin{equation}\label{eq:IMEX}
  \begin{aligned}
       V^0 & = V_0, \\
       A V^n & = V^{n-1} + \tau N(V^{n-1}) + \tau G^{n-1}, \qquad n \in \Nb,
  \end{aligned}
\end{equation}
where $V^n \approx V(t_n)$, $G^n = G(t_n)$, and $A$ is the matrix
\begin{equation}\label{eq:AMatr}
  A = (1+\gamma \tau) I_{n_\xi} - \tau \nu D_{\xi \xi}.
\end{equation}
In concrete calculations we use second-order centred finite differences, leading to
\begin{equation}\label{eq:FinDiffLapl}
  D_{\xi \xi} = 
   \Delta/h_\xi^{2}, \qquad \Delta = 
  \begin{bmatrix}
    -2 &  2     &         &        &    &     \\
     1 & -2     &       1 &        &    &     \\
       & \ddots & \ddots & \ddots  &    &     \\
       &        &      1 &      -2 & 1  &     \\
       &        &        &       2 & -2 &  
   \end{bmatrix},
 \end{equation}
in which Neumann boundary conditions are included in the differentiation matrix. 

Finally, we discuss the choice of the quadrature scheme. We use a composite trapezium
scheme with nodes $\{x_j\}$ and weights $\{\rho_j\}$ in $x$, and nodes $\{\xi_i\}$
and weights $\{ \sigma_i\}$ in $\xi$, respectively, hence quadrature and collocation
sets coincide,
\begin{equation} \label{eq:NQuad}
  N_{ij}(V) = \sum_{j' \in \Nb{n_x}} \sum_{i' \in \Nb{n_\xi}}
  W(x_j,\xi_i,x_{j'},\xi_{i'}) S(V_{i',j'}) \rho_{j'}\sigma_{i'}.
  \quad (i,j) \in \Nb_{n_\xi} \times \Nb_{n_x}.
\end{equation}

\subsection{Properties of the IMEX scheme}
In this section we collect some analytical results on the IMEX scheme
\eqref{eq:IMEX}--\eqref{eq:NQuad}. We work with spaces of sufficiently regular
continuous functions, which provides the simplest setting for our results. We denote
by $C^k(D)$ the space of $k$-times continuously differentiable functions from $D$ to
$\Rb$, where $k$ is an integer, $D$ a domain in $\Rb^3$. We also indicate
by $C_b^k(D)$ the space of continuous functions from $D$ to $\Rb$ with bounded and
continuous partial derivatives up to order $k$. 
Both spaces are endowed with the infinity norm $\Vert \blank
\Vert_\infty$. We will use the symbol $| \blank |_\infty$ for the standard
infinity-norm on matrices, induced by the corresponding vector norm. In addition, we
will denote by $\bar D$ the closure of $D$.

We begin with a generic assumption of boundedness on the functions in
\eqref{eq:systemNum}:
\begin{hypothesis}\label{hyp:boundedness}
  There exist $C_W, C_S, C_G >0 $ such that
  \[
    |W| \leq C_W \; \textrm{in $\Omega \times \Omega$},
    \qquad
    |S| \leq C_S \; \textrm{in $\Rb$},
    \qquad
    |G| \leq C_G \; \textrm{in $\Omega \times \Rb$}.
  \]
\end{hypothesis}

\begin{lemma}[Boundedness of IMEX solution]\label{lem:IMEXboundedness} 
  Assume Hypothesis \ref{hyp:boundedness}, then there exists a unique bounded
  sequence $(V^n)_{n\in\Nb}$ satisfying
  the IMEX scheme \eqref{eq:IMEX}--\eqref{eq:NQuad}. In
  addition, the following bound holds
  \[
    \vert V^n \vert_\infty \leq \vert V^0 \vert_\infty + 
    n_x \frac{ \mu(\bar\Omega) C_W C_S + C_G}{\gamma},
    \qquad n \in \Nb.
  \]
\end{lemma}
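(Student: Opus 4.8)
The plan is to reduce the whole statement to two ingredients: a uniform bound on the induced infinity-norm of the matrix $A^{-1}$, and elementary pointwise bounds on the nonlinear and forcing terms supplied by Hypothesis \ref{hyp:boundedness}. Everything else is a linear recursion.

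First I would show that $A$ in \eqref{eq:AMatr} is invertible with $|A^{-1}|_\infty \le (1+\gamma\tau)^{-1}$, by inspecting the row structure of $A = (1+\gamma\tau)I_{n_\xi} - (\tau\nu/h_\xi^2)\Delta$ with $\Delta$ as in \eqref{eq:FinDiffLapl}. Writing $\alpha = \tau\nu/h_\xi^2 > 0$, every diagonal entry of $A$ equals $(1+\gamma\tau) + 2\alpha$, while the absolute values of the off-diagonal entries in each row sum to exactly $2\alpha$: this holds in the interior rows (two coefficients $-\alpha$) and, crucially, also in the two Neumann boundary rows, where the single coefficient $2$ in $\Delta$ produces one off-diagonal entry $-2\alpha$. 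Hence $A$ is strictly diagonally dominant with dominance gap $1+\gamma\tau$ in every row. By the Levy--Desplanques theorem $A$ is nonsingular, so the recursion \eqref{eq:IMEX} determines $V^n$ uniquely from $V^{n-1}$; with $V^0=V_0$ fixed this gives existence and uniqueness of the sequence. The same dominance gap yields the norm bound: testing $Ax=b$ at the index $i$ where $|x_i|$ is maximal and isolating the diagonal term gives $(1+\gamma\tau)|x|_\infty \le |b|_\infty$, i.e. $|A^{-1}|_\infty \le (1+\gamma\tau)^{-1}$.

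Next I would bound the remaining terms. From \eqref{eq:NQuad} and Hypothesis \ref{hyp:boundedness}, each entry satisfies $|N_{ij}(V)| \le C_W C_S \big(\sum_{j'}\rho_{j'}\big)\big(\sum_{i'}\sigma_{i'}\big) = C_W C_S\, \mu(\bar\Omega)$, since the composite trapezium weights sum to the side lengths $2L_x$ and $2L_\xi$ in the two directions, whose product is $\mu(\bar\Omega)$. Because $|\cdot|_\infty$ is the maximum absolute row sum over the $n_x$ columns, this entrywise estimate produces the factor $n_x$, namely $|N(V)|_\infty \le n_x C_W C_S \mu(\bar\Omega)$, and likewise $|G^{n-1}|_\infty \le n_x C_G$. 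Applying $A^{-1}$ to \eqref{eq:IMEX} and using submultiplicativity of the induced norm then gives the scalar recursion $|V^n|_\infty \le \beta |V^{n-1}|_\infty + \beta\tau\, n_x(\mu(\bar\Omega)C_W C_S + C_G)$ with $\beta = (1+\gamma\tau)^{-1} \in (0,1)$.

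Finally I would solve this recursion by iteration. Summing the geometric series and using the identity $1-\beta = \gamma\tau\beta$, the forcing coefficient telescopes to exactly $n_x(\mu(\bar\Omega)C_W C_S + C_G)/\gamma$, while $\beta^n \le 1$ bounds the contribution of $V^0$ by $|V^0|_\infty$; this is precisely the stated inequality, and since the right-hand side is independent of $n$ it also certifies boundedness. I expect the only delicate point to be bookkeeping the factor $n_x$ correctly: it is a genuine feature of the induced matrix norm summing over columns, not an artefact, so I would keep the entrywise bound on $N(V)$ carefully separate from its norm. The diagonal-dominance argument for $A$ is the technical heart, but it becomes routine once the row structure of $\Delta$, including the two Neumann rows, is made explicit.
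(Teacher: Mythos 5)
Your proof is correct and follows essentially the same route as the paper: strict diagonal dominance of $A$ yielding $|A^{-1}|_\infty \le (1+\gamma\tau)^{-1}$, entrywise bounds on $N$ and $G$ producing the factor $n_x\mu(\bar\Omega)C_WC_S$ and $n_xC_G$, and a geometric-series argument closing the recursion with $q/(1-r) = n_x(\mu(\bar\Omega)C_WC_S+C_G)/\gamma$. The only cosmetic difference is that the paper establishes invertibility via the closed-form eigenvalues of $D_{\xi\xi}$ and cites Varah's bound, whereas you derive both invertibility and the norm bound directly from the row structure of $\Delta$ (including the Neumann rows), which is a self-contained version of the same argument.
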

\begin{proof}
  The matrix $A$ in \eqref{eq:AMatr} has real, strictly positive eigenvalues given
  by
  \[
    \lambda_k = 1 + \gamma \tau + \frac{4\nu \tau}{h_\xi^2} 
    \bigg[ \sin \bigg( \frac{\pi(k-1)}{2n_\xi} \bigg) \bigg]^2, \qquad k \in
    \Nb_{n_\xi},
  \]
  where we have used the fact that the eigenvalues of $D_{\xi \xi}$ are known in closed form.
  We conclude that $A$ is invertible, hence for any fixed $n \in \Nb$,
  the matrix $V^n$ solving \eqref{eq:IMEX} is unique. In addition, $A$ is
  strictly diagonally dominant, hence the following bound holds~\cite{varah75}
  \begin{equation}\label{eq:invABound}
  \vert A^{-1} \vert_\infty \leq \max_{i \in \Nb_\xi} \frac{1}{|A_{ii}| - \sum_{j
  \neq i} |A_{ij}|} = \frac{1}{1+ \gamma \tau}.
  \end{equation}
  To prove boundedness of the sequence $(V^n)_{n \in \Nb}$ we first bound the
  matrices $N(V^{n-1})$, $G^n$ appearing in \eqref{eq:IMEX}
  \[
    \begin{aligned}
    \vert N(V^{n-1}) \vert_\infty 
    & = \max_{i \in \Nb_{n_\xi}} \sum_{j \in \Nb_{n_x}} |N_{ij}(V^{n-1})| \\
    & \leq
    \max_{i \in \Nb_{n_\xi}} \sum_{j \in \Nb_{n_x}} 
                             \sum_{j' \in \Nb{n_x}} \sum_{i' \in \Nb{n_\xi}}
                              |W(x_j,\xi_i,x_{j'},\xi_{i'}) S(V_{i',j'}) \rho_{j'}\sigma_{i'}| \\
    & \leq C_W C_S
    \max_{i \in \Nb_{n_\xi}} \sum_{j \in \Nb_{n_x}} 
                   \sum_{j' \in \Nb{n_x}} \sum_{i' \in \Nb{n_\xi}}
		  \rho_{j'}\sigma_{i'} \\
    & \leq C_W C_S
    \max_{i \in \Nb_{n_\xi}} \sum_{j \in \Nb_{n_x}} \mu(\bar\Omega)
    = n_x \mu(\bar\Omega) C_W C_S,
    \end{aligned},
  \]
  and similarly $\vert G^{n-1} \vert_\infty \leq n_x C_G$, and then combine
  them with the bound for $\vert A^{-1}\vert_\infty$ to find
  \[
    \begin{aligned}
    \vert V^n \vert_\infty 
        & \leq \vert A^{-1} \vert_\infty 
	  \Big(
	      \vert V^{n-1} \vert_\infty 
	    + \tau \vert N(V^{n-1}) \vert_\infty
	    + \tau \vert G^{n-1} \vert_\infty 
	  \Big) \\
	& \leq \frac{1}{1+\gamma \tau}
	  \Big(
	      \vert V^{n-1} \vert_\infty 
	      + \tau n_x \mu(\bar\Omega) C_W C_S 
	      + \tau n_x C_G
	  \Big).
    \end{aligned}
  \]
  We set
  \[
    r = \frac{1}{1+\gamma \tau} <1, \qquad 
    q = \frac{\tau n_x}{1 + \gamma \tau}
      ( \mu(\bar\Omega) C_W C_S + C_G),
  \]
  and use induction and elementary properties of the geometric series to obtain
  \[
    \vert V^n \vert_\infty \leq r^n \vert V^0 \vert_\infty + q \sum_{j =0}^{n-1} r^j
      \leq \vert V^0 \vert_\infty + \frac{q}{1-r},
  \]
  which proves the assertion.
\end{proof}

In addition to proving boundedness of the solution, we address the
convergence rate of the IMEX scheme. For this result, we assume the existence of a
sufficiently regular solution to \eqref{eq:systemNum}.

\begin{lemma}[Local convergence rate of the IMEX scheme]\label{lem:IMEXconvergence}
  Assume Hypothesis \ref{hyp:boundedness}, $W \in C^2(\Omega \times \Omega)$, $S
  \in C^2_b(\Omega)$, and assume \eqref{eq:systemNum}
  admits a strong solution $V_*$ 
  whose partial derivatives $\partial_{tt}V_*$, $\partial_{xx}V_*$,
  $\partial_{x\xi}V_*$, $\partial_{\xi\xi}V_*$, $\partial_{\xi\xi\xi\xi}V_*$
  exist and are bounded on $\bar \Omega \times [0,T]$. Denote 
  by $V^n_*$ the matrix with elements $(V_*^n)_{ij} =
  V_*(x_j,\xi_i,t_n)$, for $(i,j,n) \in \Nb_{n_x} \times \Nb_{n_\xi} \times \Nb_{n_t}$. 
  Further, let $(V^n)_{n \in \Nb}$ be the solution to the IMEX scheme
  \eqref{eq:IMEX}--\eqref{eq:NQuad}, and let
  \[
    \zeta = n_x \mu(\bar\Omega) \Vert W \Vert_\infty \Vert S' \Vert_\infty, 
  \qquad
  h = \max(h_\xi, h_x).
  \]
  There exist constants $C_\tau, C_h >0$ such that
  \begin{align}
     & |V^n - V_*^n|_\infty \leq \frac{1}{\gamma -\zeta}(C_\tau \tau + C_h h^2) 
	  & \text{if $\zeta < \gamma$} \label{eq:bound1}, \\
     & |V^n - V_*^n|_\infty \leq \frac{T}{1+ \gamma \tau} (C_\tau \tau + C_h h^2)
	  & \text{if $\zeta = \gamma$} \label{eq:bound2}, \\
     & |V^n - V_*^n|_\infty \leq 
     \frac{C_\tau \tau + C_h h^2}{\zeta - \gamma} \exp 
     \frac{(\zeta - \gamma) T}{1+\gamma \tau}
          & \text{if $\zeta > \gamma$} \label{eq:bound3}.
  \end{align}
\end{lemma}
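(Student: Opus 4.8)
The plan is to combine a consistency (local truncation) estimate with the discrete stability already furnished by the bound \eqref{eq:invABound} on $|A^{-1}|_\infty$, and then to close the argument with a discrete Gronwall recursion whose solution splits naturally into the three regimes $\zeta<\gamma$, $\zeta=\gamma$, $\zeta>\gamma$. First I would introduce the error matrix $E^n = V^n - V_*^n$ and record that $E^0 = 0$, since the scheme is initialised with the sampled exact data $V^0 = V_0 = V_*^0$; this is precisely what removes any $|V^0|_\infty$ contribution from the final bounds.

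The first substantive step is the consistency estimate. Substituting the sampled exact solution into \eqref{eq:IMEX} defines a residual $R^n = A V_*^n - V_*^{n-1} - \tau N(V_*^{n-1}) - \tau G^{n-1}$, and I would show that $|R^n|_\infty \le \tau(C_\tau\tau + C_h h^2)$ with $C_\tau,C_h$ independent of $n,\tau,h$. Expanding $A$ via \eqref{eq:AMatr}, inserting the PDE \eqref{eq:systemNum} satisfied by $V_*$ at $t_n$, and cancelling the $\gamma\tau V_*^n$ terms, the residual decomposes into: (i) the backward-Euler time error $-\frac{\tau^2}{2}\partial_{tt}V_*(\theta_n)$, controlled by $\Vert\partial_{tt}V_*\Vert_\infty$; (ii) the finite-difference error $\nu\tau(\partial_{\xi\xi}V_* - D_{\xi\xi}V_*^n)$, which is $O(\tau h_\xi^2)$ by the standard second-order truncation estimate for the three-point stencil (with the Neumann rows of $\Delta$ in \eqref{eq:FinDiffLapl} handled by one-sided Taylor expansions), controlled by $\Vert\partial_{\xi\xi\xi\xi}V_*\Vert_\infty$; and (iii) the quadrature error $\tau(K(V_*)(t_{n-1}) - N(V_*^{n-1}))$ together with the time-lag terms $\tau(K(V_*)(t_n)-K(V_*)(t_{n-1}))$ and $\tau(G^n-G^{n-1})$. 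The time-lag terms are $O(\tau^2)$ once $K(V_*)$ and $G$ are Lipschitz in $t$, the former following from boundedness of $\partial_t V_*$ (itself a consequence of the PDE) together with $\Vert S'\Vert_\infty<\infty$. I expect the quadrature term to be the main obstacle: bounding the composite trapezium error $K(V_*)-N(V_*)$ by $O(h_x^2+h_\xi^2)$ requires the second derivatives of the integrand $(y,\eta)\mapsto W(x,\xi,y,\eta)S(V_*(y,\eta,t))$, and it is exactly here that the hypotheses $W\in C^2(\Omega\times\Omega)$, $S\in C_b^2(\Omega)$, and boundedness of $\partial_{xx}V_*$, $\partial_{x\xi}V_*$, $\partial_{\xi\xi}V_*$ enter, via the product and chain rules. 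Collecting (i)--(iii) gives $|R^n|_\infty\le\tau(C_\tau\tau+C_h h^2)$ with $h=\max(h_\xi,h_x)$.

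Next I would derive the error recursion. Subtracting the scheme for $V^n$ from the residual identity for $V_*^n$ gives $A E^n = E^{n-1} + \tau(N(V^{n-1})-N(V_*^{n-1})) - R^n$. Applying $A^{-1}$ and using \eqref{eq:invABound} yields $|E^n|_\infty \le (1+\gamma\tau)^{-1}(|E^{n-1}|_\infty + \tau|N(V^{n-1})-N(V_*^{n-1})|_\infty + |R^n|_\infty)$. The Lipschitz estimate $|N(V)-N(\tilde V)|_\infty \le \zeta\,|V-\tilde V|_\infty$ is obtained exactly as in the proof of Lemma \ref{lem:IMEXboundedness}, but applying the mean value theorem to $S$ so that the factor $C_S$ is replaced by $\Vert S'\Vert_\infty$; this produces the constant $\zeta = n_x\mu(\bar\Omega)\Vert W\Vert_\infty\Vert S'\Vert_\infty$. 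Writing $a = (1+\tau\zeta)/(1+\gamma\tau)$ and $b = \tau(C_\tau\tau+C_h h^2)/(1+\gamma\tau)$, and using $E^0=0$, induction gives the closed form $|E^n|_\infty \le b\sum_{k=0}^{n-1}a^k$.

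Finally I would evaluate this geometric sum in the three regimes. When $\zeta<\gamma$ one has $a<1$, so $\sum_{k=0}^{n-1}a^k\le(1-a)^{-1}=(1+\gamma\tau)/(\tau(\gamma-\zeta))$, and the $(1+\gamma\tau)$ and $\tau$ factors cancel against those in $b$ to give \eqref{eq:bound1}. When $\zeta=\gamma$ one has $a=1$, so the sum equals $n$ and $n\tau\le T$ yields \eqref{eq:bound2}. When $\zeta>\gamma$ one has $a>1$; bounding $\sum_{k=0}^{n-1}a^k\le a^n/(a-1)$, using $a-1=\tau(\zeta-\gamma)/(1+\gamma\tau)$, and estimating $a^n=(1+(a-1))^n\le\exp(n(a-1))\le\exp((\zeta-\gamma)T/(1+\gamma\tau))$ via $1+x\le e^x$ and $n\tau\le T$, gives \eqref{eq:bound3}. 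Since all constants are independent of $n$, the bounds hold for every $n\in\Nb$, completing the argument.
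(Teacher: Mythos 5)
Your proposal is correct and follows essentially the same route as the paper: a truncation-error identity for the sampled exact solution, the stability bound $|A^{-1}|_\infty \le (1+\gamma\tau)^{-1}$ from \eqref{eq:invABound}, the Lipschitz estimate $|N(V)-N(\tilde V)|_\infty \le \zeta |V-\tilde V|_\infty$, and the resulting geometric recursion split into the three regimes $\zeta<\gamma$, $\zeta=\gamma$, $\zeta>\gamma$. The only difference is that you spell out the decomposition of the residual (time-stepping, finite-difference, quadrature, and time-lag contributions), which the paper compresses into a single appeal to standard truncation estimates.
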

\begin{proof}
  The regularity assumptions on $V_*$, and standard results on finite-difference
  approximation and trapezium quadrature rule guarantee the existence of constants
  $C_{tt},C_{xx},C_{\xi\xi},C_{\xi\xi\xi\xi} > 0$ such that for all $n \in \{0 \}
  \cup \Nb_{n_t}$
  \begin{equation}\label{eq:IMEXExact}
    AV_*^n = V_*^{n-1} + \tau \big( N(V_*^{n-1}) + G^{n-1} +  
	C_{tt} \tau + C_{\xi\xi\xi\xi}h^2_\xi + C_{xx}h_x^2 + C_{\xi\xi}h_\xi^2
    \big),
  \end{equation}
  where the errors for the forward finite-difference in $t$, centred finite-difference
  in $\xi$, and trapezium rule are listed progressively, with constants proportional
  to the respective partial derivatives. We subtract \eqref{eq:IMEXExact} from
  \[
    AV^n = V^{n-1} + \tau N(V^{n-1}) + \tau G^{n-1}, 
  \]
  and obtain the error bound
  \begin{equation}\label{eq:intermediateBound}
    |V - V_*^n|_\infty \leq |A^{-1}|_\infty \big(|V - V_*^n|_\infty + \tau |N(V^n) -
    N(V_*^n)|_\infty + \tau \omega \big),
  \end{equation}
  where $\omega = C_\tau \tau + C_h h^2$, $C_\tau = C_{tt}$, $C_h =
  \max(C_{xx},C_{\xi\xi},C_{\xi\xi\xi\xi})$, and
  $h = \max(h_\xi,h_x)$. 
  Since the first derivative $S'$ of $S$ is bounded, we have the following estimate
  for the nonlinear term
  \[
    \begin{aligned}
    |N(V^{n}) - N(V_*^n)|_\infty  
    & \leq
    \Vert W \Vert_\infty
    \Vert S' \Vert_\infty
    \max_{i \in \Nb_{n_\xi}} \sum_{j \in \Nb_{n_x}} 
	       \sum_{j' \in \Nb{n_x}} \sum_{i' \in \Nb{n_\xi}}
	       |V^n_{i'j'}-(V_*^n)_{i'j'}|\rho_{j'}\sigma_{i'} \\
    & \leq
    n_x \mu(\bar \Omega)
    \Vert W \Vert_\infty
    \Vert S' \Vert_\infty
    \vert V^n - V_*^n \vert_\infty \\
    &
    = \zeta \vert V^n - V_*^n \vert_\infty ,
    \end{aligned}
  \]
  which, together with \eqref{eq:invABound} and \eqref{eq:intermediateBound} gives a
  recursive bound for the $\infty$-norm matrix error $|V^n-V_*^n|_\infty$\footnote{The scalar values $r,q$ defined in this proof are
  different from the ones defined in the proof of \cref{lem:IMEXboundedness}.},

  \[
    E^0 = 0, \qquad
    E^n \leq \frac{1+\zeta \tau}{1+\gamma \tau}E^{n-1} + \frac{\tau \omega}{1 + \gamma
    \tau} := r E^{n-1} + q.
    \qquad n \in \Nb_{n_t}.
  \]
  Hence,
  \begin{equation}\label{eq:intermediateBound2}
    E^n \leq q \frac{r^n - 1}{r-1}, \quad r \neq 1,
    \qquad E^n \leq n q \quad r = 1,
    \qquad
    n \in \Nb_{n_t}.
  \end{equation}
  If $ \zeta < \gamma$, then $r < 1$, and we obtain \eqref{eq:bound1} as
  \[
    E^n \leq 
   \frac{q}{1-r} = \frac{\omega}{\gamma - \zeta} = \frac{1}{\gamma -\zeta}(C_\tau
    \tau + C_h h^2),
    \qquad n \in \Nb_{n_t}.
  \]
  If $\zeta = \gamma$, then $r = 1$ and \eqref{eq:bound2} is found as follows
  \[
    E^n \leq n q 
    \leq \frac{n_t\tau \omega}{1 + \gamma\tau}
    = \frac{T}{1 + \gamma\tau}(C_\tau \tau + C_h h^2),
    \qquad n \in \Nb_{n_t}.
  \]
  If $\zeta > \gamma$, then $r > 1$ and we can bound the $n$th term of the sequence
  with an exponential, using the bound $(1+x/n)^n \leq \e^x$ for all $x \in \Rb$, as
  follows,
  \[
    r^n = 
    \bigg(
      1 + \frac{(\zeta - \gamma) n \tau}{n(1+\gamma \tau)}
    \bigg)^n
    \leq
    \exp \frac{(\zeta - \gamma) n \tau}{1+\gamma \tau}
    \leq
    \exp \frac{(\zeta - \gamma) T}{1+\gamma \tau},
  \]
  which combined with \eqref{eq:intermediateBound2} gives \eqref{eq:bound3}:
  \[
    E^n \leq \frac{\omega}{\zeta - \gamma} \exp \frac{(\zeta - \gamma) T}{1+\gamma \tau}
     = \frac{C_\tau \tau + C_h h^2}{\zeta - \gamma} \exp \frac{(\zeta - \gamma)
     T}{1+\gamma \tau}.
  \]
\end{proof}

The preceding lemma shows that the IMEX scheme has first order convergence in time,
and second order convergence in space. As expected, this conclusion holds without
imposing any restriction to the size of $\tau$ in relation to $h$, as happens, for
example, in the case of explicit methods for parabolic equations. In passing we note
that if $\zeta < \gamma$ and $V_*(t)$ exists for all $t \in \Rb$, the error estimate
\eqref{eq:bound1} holds for $n \in \Nb$, that is, in an unbounded interval of time;
on the other hand, the error estimates do not hold on an unbounded time interval when
$\zeta \geq \gamma$, as the bounds depend on $T$.

\subsection{Implementational aspects and efficiency}

In this section we make a few considerations on the implementation of the proposed
IMEX scheme, with the view of comparing its efficiency to an ordinary IMEX scheme,
that is, to an IMEX scheme applied to \eqref{eq:ODEs}.

\subsubsection{Implementation}
IMEX schemes for planar semilinear problems require the inversion of a
discretised Laplacian, which usually is a square matrix with the same dimension of
the problem ($n_\xi n_x$ equations in our case). The particular structure of the
problem under consideration, however, implies that the matrix to be inverted is much
smaller (the square matrix $A$ has only $n_\xi$ rows and $n_\xi$ columns). At each
time step
\eqref{eq:IMEX} we solve a problem of the type $AX=B$, where $A \in \Rb^{n_\xi \times
n_\xi}$, and $X, B \in \Rb^{n_\xi \times n_x}$. This can be achieved efficiently by
pre-computing a factorisation of $A$, and then back-substituting for all
columns of $B$. Since the matrix $A$ is sparse and with low bandwidth, efficient
implementations of the $LU$ decompositions and backsubstitution can be used to solve
the $n_x$ linear problems corresponding to the columns of $X$ and $B$.

An important aspect of the numerical implementation is the evaluation of the
nonlinear term \eqref{eq:NQuad}: evaluating the right-hand side of
\eqref{eq:IMEX} requires in general $O(n^2_\xi n^2_x)$ operations, which is a
bottleneck for the time stepper, in particular for large domains. However, the
structure of the problem can be exploited once again to evaluate this term
efficiently. We make use of the following properties: 
\begin{enumerate}
  \item The kernel $W$ specified in \eqref{eq:kernel} has a product structure,
    hence
    \[
      W(x_j,\xi_i,x_{j'},\xi_{i'}) = \alpha_i \alpha'_{i'}
      w(|x_j-x_{j'}|). 
    \]
    where $\alpha_i = \delta_\ep(\xi_i-\xi_0)$, $\alpha'_{i'} = \delta_\ep(\xi_{i'})$.
    In addition, $w$ is periodic, therefore the matrix with entries $w(|x_j -
    x_{j'}|)$ is circulant with (rotating) row vector $w 
    = \{w(|x_j|) \colon j \in \Nb_{n_x}\}  \in \Rb^{1 \times n_x}$.
   \item The function $x \mapsto V(x,\blank)$ is $2L_x$-periodic, hence the trapezium
     rule has identical weights $\rho_j = h_x$, and the integration
     in $x$ is a circular convolution, which can be performed efficiently in $O(n_x
     \log n_x)$ operations, using the Discrete Fourier Transform (DFT).
\end{enumerate}
We have
\begin{equation}\label{eq:NSlow}
  N_{ij}(V) = \alpha_i \sum_{j' \in \Nb_{n_x}} w_{j-j'} \rho_{j'}
  \sum_{i' \in \Nb_{n_\xi}} \alpha'_{i'} \sigma_{i'} S(V_{i'j'})
  \qquad
  (i,j) \in \Nb_{n_\xi} \times \Nb_{n_x},
\end{equation}
and a DFT can be used to perform the outer
sums~\cite{coombes2012interface,Rankin2014}.
Introducing the direct, $\cF_n$, and inverse, $\cF_n^{-1}$, DFTs for $n$-vectors, we express compactly the nonzero elements of $N$ as
follows
\begin{equation}\label{eq:NFast}
  N = \alpha h_x \cF^{-1}_{n_x} 
   \big[
     \cF_{n_x}[w] \odot  \cF_{n_x}[ (\alpha' \odot \sigma)^T S(V)]
   \big],
\end{equation}
where $\alpha, \alpha', \sigma \in \Rb^{n_\xi \times 1}$
are column vectors, and $\odot$ denotes the Hadamard product, that is, elementwise
vector multiplication. The formula above evaluates the
nonlinear term $N$ in just 
$O(n_xn_\xi) + O(n_x \log n_x)$ operations.

We summarise our implementation with the pseudocode provided in Algorithm
\ref{alg:smart}, and we will henceforth compare quantitatively its efficiency with
a standard IMEX implementation, which we also provide in Algorithm \ref{alg:naive}. The
matricial version, Algorithm \ref{alg:smart} exposes row- and column-vectors, for
which a very compact Matlab implementation can be derived. We give an example of such
implementation in Appendix~\ref{sec:matlab}, and we refer the reader to
\cite{daniele_avitabile_2020_3731920} for a repository of codes used in this article.

\subsubsection{Efficiency estimates}

We now make a few considerations about the efficiency of our algorithm. We will provide
two main measures of efficiency: an estimate of the floating point operations (flops),
and an estimate of the storage space (in floating point numbers) required by the
algorithm, as a function of the input data which, in our case, are the number of
gridpoints in each direction, $n_x$ and $n_\xi$. We are  interested
in how the estimates scale for large $n_x, n_\xi$.

To estimate the number of flops, we count the number of operations required by
Algorithms~\ref{alg:smart} and \ref{alg:naive} in the initialisation step (lines 2--6), and
in a single time step (lines 8--12). We base our estimates on the following
facts and hypotheses:
\begin{enumerate}
  \item The cost of multiplying an $m$-by-$n$ matrix by an $n$-vector is $2mn - m$
    flops.
  \item If an $n$-by-$n$ matrix is tridiagonal, then the matrices $L$ and $U$ of its
    $LU$-factorisation are bidiagonal, and $L$ has $1$ along its main diagonal.
    This implies that storing the $LU$ factorisation requires only $3$
    $n$-vectors. Calculating the $LU$ factorisation costs $2n + 1$ flops, while
    solving the corresponding linear problem $LUx = b$, with $x,b \in \Rb^n$,
    requires $2n-2$ and $3n-2$ flops for the forward- and backward-subsitution,
    respectively. Similar considerations apply if $A$ is not tridiagonal, but still
    sparse, as it would be obtained using a different discretisation method for the
    diffusive operator: estimates for the flops of the corresponding
    $PLU$-factorisation depend, in general, on the sparsity pattern of $A$, as well
    as on the permutation strategy, which is heuristic but can have an impact on the
    sparsity of $L$ and $U$, thereby influencing the performance of the algorithm.
    We present calculations only in the case of a tridiagonal matrix $A$, for which
    explicit estimates are possible.

  \item As stated above, it is well known that a single FFT of an $n$-vector costs
    $O(n \log n)$ operations.

  \item We assume that function evaluations of the functions $G$, $S$, $w$, $\delta$
    cost one flop. This estimate is optimistic, as most function evaluations will
    require more than one flop, but we make this simplifying assumption for both the
    algorithms we are comparing.
\end{enumerate}

\begin{algorithm}
  \caption{IMEX time stepper in matrix form \eqref{eq:MatrixODE}, nonlinear term
  computed with pseudospectral evaluation \eqref{eq:NFast}}
  \label{alg:smart}
\DontPrintSemicolon
\SetAlgoNoLine
\SetKwInOut{Input}{Input}
\SetKwInOut{Output}{Output}

\Input{Initial condition $V^0 \in \Rb^{n_\xi \times n_x}$, time step $\tau$, number of steps $n_t$.}
\Output{An approximate solution $(V^n)_{n=1}^{n_t} \subset \Rb^{n_\xi \times n_x}$}
\Begin{
  Compute grid vectors $\xi \in \Rb^{n_\xi \times 1}$, $x \in \Rb^{1 \times n_x}$. \;
  Compute synaptic vectors $w, \hat w = \mathcal{F}_{n_x}[w] \in \Rb^{1 \times n_x}$.\;
  Compute synaptic vectors $\alpha, \alpha' \in \Rb^{n_\xi \times 1}$. \;
  Compute quadrature weights $\sigma \in \Rb^{n_\xi \times 1}$.  \;
  Compute sparse $LU$-factorisation of $A$, 
  \[
    LU=A \in \Rb^{n_\xi \times n_\xi}.
  \]\;
  \For{$n = 1,\ldots,n_t$}{
    Set $V = V^{n-1} \in \Rb^{n_\xi \times n_x} $. \;
    Compute the external input at time $t_{n-1}$ and store it in $G \in \Rb^{n_\xi \times n_x}$.\;
    Set $z = \cF_{n_x}\big[(\alpha' \odot \sigma)^T S(V)\big] \in \Rb^{1 \times n_x} $.\;
    Set $N =  h_x \alpha \cF^{-1}_{n_x} [ \hat w \odot  z] \in \Rb^{n_\xi \times n_x}$.\;
    Solve for $V^{n}$ the linear problem $(LU)V^n = V + \tau(N+G)$.
  }
}
\end{algorithm}

\begin{algorithm}
\caption{IMEX time stepper in vector form \eqref{eq:ODEs}, nonlinear term evaluated with
quadrature formula \eqref{eq:NQuad}.}
\label{alg:naive}
\SetAlgoNoLine
\DontPrintSemicolon
\SetKwInOut{Input}{Input}
\SetKwInOut{Output}{Output}

\Input{Initial condition $U^0 \in \Rb^{n_\xi n_x}$, time step $\tau$, number of steps $n_t$.}
\Output{An approximate solution $(U^n)_{n=1}^{n_t} \subset \Rb^{n_\xi n_x}$}
\Begin{
\BlankLine
  Compute grid vectors $\xi \in \Rb^{n_\xi}$, $x \in \Rb^{n_x}$. \;
  Compute synaptic vector $w \in \Rb^{1 \times n_x}$.\;
  Compute synaptic vectors $\alpha, \alpha' \in \Rb^{n_\xi \times 1}$. \;
  Compute quadrature weights $\rho \in \Rb^{n_x}$, $\sigma \in \Rb^{n_\xi}$.  \;
  Compute sparse $LU$-factorisation 
  \[
    LU = \big( 
      (1+\tau \gamma) I_{n_x n_\xi} - \tau \nu I_{n_x} \otimes D_{\xi \xi} 
	\big)
      \in \Rb^{n_\xi n_x \times n_\xi n_x }.
  \]
\BlankLine
  \For{$n = 1,\ldots,n_t$}{
    Set $Z = U^{n-1} \in \Rb^{n_\xi n_x} $. \;
    Compute the external input at time $t_{n-1}$ and store it in $G \in \Rb^{n_\xi n_x}$. \;
    Compute the nonlinear term $N$ using \eqref{eq:NQuad}.\;
    Solve for $U^{n}$ the linear problem $(LU)U^n = Z + \tau(N+G)$.
  }
}
\end{algorithm}

In Table~\ref{tab:flops} we count flops required in each line of Algorithms
\ref{alg:smart} and \ref{alg:naive}. The data is grouped so as to distinguish between
the initialisation phase of the algorithms, and the iterations for the time steps.
Algorithm \ref{alg:smart} outperforms substantiatlly Algorithm \ref{alg:naive} in
both phases. In the initialisation, the number of flops scales linearly for Algorithm
\ref{alg:smart}, and quadratically for Algorithm \ref{alg:naive}. This is mostly
owing to the $LU$-factorisation step, which involves the $n_\xi$-by-$n_\xi$ matrix
$A$ in the former, and an $n_\xi n_x$-by-$n_\xi n_x$ matrix in the latter. 

The efficiency gain is more striking in the cost per time step: owing to the
pseudospectral evaluation of the nonlinearity, only $O(n_\xi n_x) + O(n_x \log n_x)$
flops are necessary in Algorithm \ref{alg:smart}, as opposed to $O(n^2_\xi n^2_x)$
flops in Algorithm \ref{alg:naive}.

\begin{table}
  \centering
  \caption{Flop count for the initialisation step (lines 2--6) and for one time
  step (lines 8--12) in Algorithms 1,2.}
  \label{tab:flops}
\begin{tabular}{cccc}
  \toprule
  \multicolumn{2}{c}{Algorithm 1} & \multicolumn{2}{c}{Algorithm 2} \\
  Lines & Flops & Lines & Flops \\
  \otoprule
   2    & $  n_\xi + n_x        $  &  2   & $  n_\xi + n_x                  $ \\
   3    & $  2n_x               $  &  3   & $     n_x                       $ \\
   4    & $  2n_\xi             $  &  4   & $    2n_\xi                     $ \\
   5    & $  n_\xi              $  &  5   & $   n_\xi + n_x                 $ \\
   6    & $  2n_\xi-1           $  &  6   & $   2n_\xi n_x -1               $ \\
  \midrule
  2--6 & $  O(n_\xi) + O(n_x)  $  & 2--6 & $ O(n_\xi n_x) $ \\
  \midrule
   8    & $  n_\xi n_x                                              $  &  8   & $  n_\xi n_x                            $ \\
   9    & $  n_\xi n_x                                              $  &  9   & $  n_\xi n_x                            $ \\  
   10   & $  3 n_\xi n_x + O(n_x \log n_x) + n_\xi - n_x               $  &  10  & $  2n^2_\xi n_x^2 - n^2_\xi n_x         $ \\
   11   & $  n_\xi n_x + O(n_x \log n_x) + 2n_x                        $  &  11  & $  5n_\xi n_x -4                        $ \\
   12   & $  5 n_\xi n_x - 4 n_x                                    $  &      & $                                       $ \\
  \midrule
   8--12 & $O(n_\xi n_x) + O(n_x \log n_x)$  & 8--11 & $ O(n^2_\xi n^2_x) $ \\
  \bottomrule
%
\end{tabular}
\end{table}

\begin{table}
  \centering
  \caption{Space requirements, measured in Floating Point Numbers, for
  Algorithms 1 and 2. Arrays $d_1, \ldots, d_3$, store diagonals of the
$LU$-factorisation in the respective algorithms.}
\label{tab:memory}
\begin{tabular}{ccc}
  \toprule
   Floating Point Numbers & Algorithm 1 & Algorithm 2 \\
  \otoprule
  $n_\xi$       & $\xi,\alpha,\alpha',d_1,d_2,d_3,z$ & $\xi, \rho, \alpha, \alpha'$ \\  
  $n_x$         & $x,w,\sigma$                       & $x, \sigma, w$ \\  
  $n_\xi n_x$   &  $V,V^n,G,N$                       & $U^n,Z,N,G,d_1,d_2,d_3$ \\  
  \midrule
  Total         & $4n_x n_\xi + 7n_\xi + 3n_x$      &  $ 7 n_x n_\xi +2n_\xi + 2n_x $  \\  
  \bottomrule
\end{tabular}
\end{table}

An important point to note that, in the case of a 2D somatic space with, say
coordinates $(x,y,\xi)$ and $n_x=n_y=N$, $n_\xi$ grid points (see
Figure~\ref{fig:sketch}(a)), the size of the matrix $A$ in
Algorithm \ref{alg:smart} remains unaltered, while Algorithm \ref{alg:naive}
requires the factorisation and inversion of a much larger matrix, of size $n_\xi N^2$-by-$n_\xi
N^2$. Estimates for the efficiencies in this case can be obtained by replacing $n_x$ by
$N^2$ in the table, leading to much greater savings.

Finally, in Table~\ref{tab:memory} we collect the variables used by both algorithms,
and count the storage requirement of each of them, measured floating point numbers.
The results show that Algorithm 1 requires asymptotically the same storage as
Algorithm 2 $O(n_\xi n_x)$. For fixed values of $n_\xi$ and $n_x$, however, the
latter uses almost twice as much storage space as the former.

\section{Travelling waves}\label{sec:TWTest}
\begin{figure}
  \centering
  \includegraphics{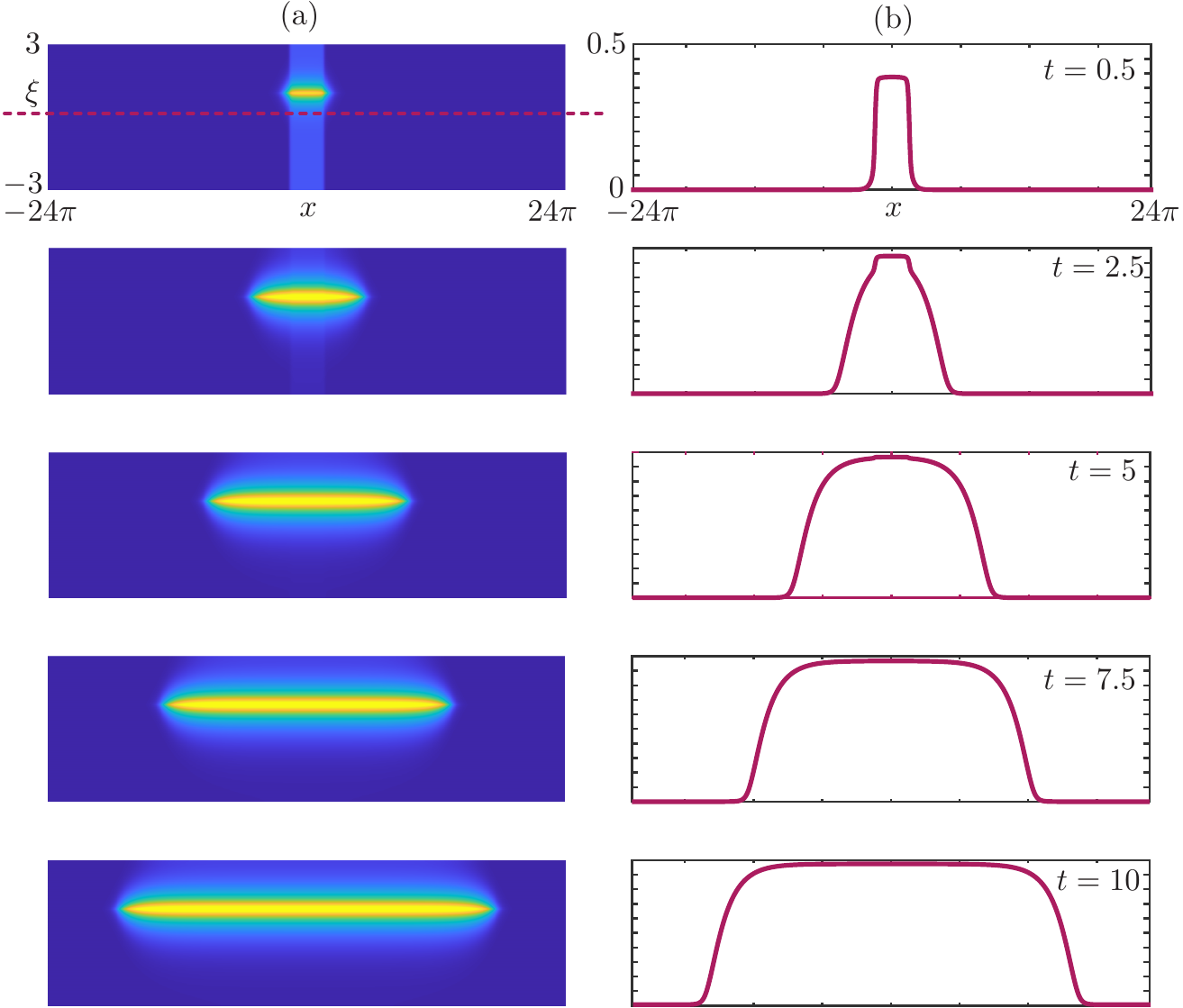}
  \caption{ Coherent structure observed in time
    simulation of \eqref{eq:systemNum}, \eqref{eq:TWSigmoidal}, \eqref{eq:TWKernel}.
    (a): Pseudocolor plot of $V(x,\xi,t)$ at several time points, showing two
    counter-propagating waves. (b): Solution at $\xi = 0$, showing the wave
    profile. Parameters: $\xi_0 = 1$, $\ep = 0.005$, $\nu=0.4$, $\gamma=1$, $\beta=1000$,
    $\theta =0.01$, $\kappa =3$, $L_x = 24 \pi$, $L_\xi =3$, $n_x =
    2^{10}$, $n_\xi=2^{12}$, $\tau = 0.05$.}
    \label{fig:wave}
\end{figure}
We tested the algorithm on an analytically tractable neural field problem, and we
report in this section our numerical experiments. For the test, we take a sigmoidal
firing rate function
\begin{equation}\label{eq:TWSigmoidal}
  S(V) = \frac{1}{1 + \exp(-\beta(V-\theta))}, 
\end{equation}
and kernel specified by
\begin{equation}\label{eq:TWKernel}
  w(x) = \frac{\kappa}{2} \exp \bigg( -\frac{|x|}{2} \bigg),
  \qquad
  \delta_\ep(\xi) = \frac{1}{\ep
  \sqrt{\pi}} \exp\bigg(-\frac{\xi^2}{\ep^2}\bigg) 
\end{equation}
where $\beta, \theta, \kappa$ are positive constants. If $S(V) = H(V- \theta)$, $H$
being the Heaviside
function, $\delta_\ep$ is replaced by the Dirac delta distribution, and the
evolution equation is posed on $\Rb^2$, then the model supports solutions for which
$V(x,0,t)$ is a travelling front 
$
V(x,0,t) = \varphi(x-v_*t)$, 
with
$\varphi(\pm \infty) = V_\pm$,
whose speed $v_*$ satisfies the implicit equation~\cite{Ross2019}
\begin{equation}\label{eq:analyticalSpeed}
  \frac{\kappa \exp(-\psi(v_*,\nu) \xi_0 )}{2 \psi(v_*,\nu) \nu} -\theta = 0,
  \qquad \psi(v_*,\nu) = \sqrt{\frac{\gamma + v_*}{\nu}}.
\end{equation}
\begin{figure}
  \centering
  \includegraphics{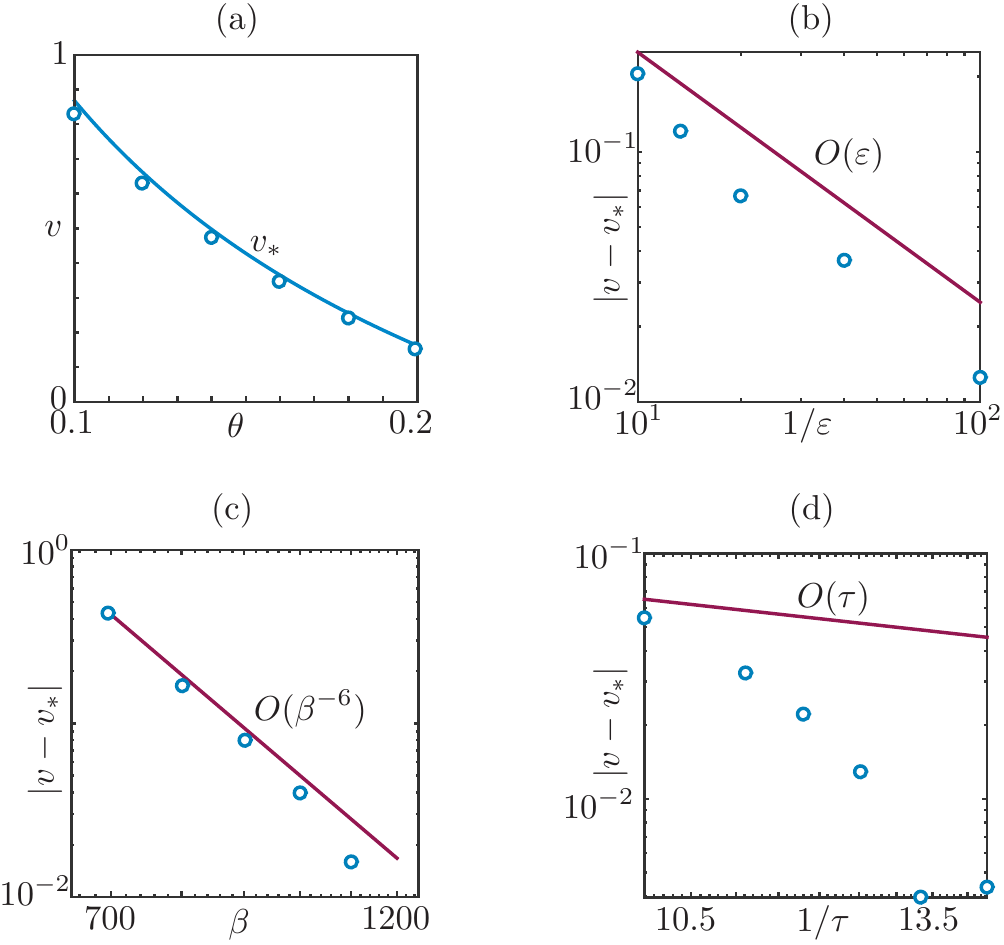}
  \caption{(a) Travelling wave speed versus firing rate
    threshold, computed analytically via \eqref{eq:analyticalSpeed}, and numerically
    via the time-stepper. (b)--(d) Convergence of the computed speed to the
    analytical speed at $\theta = 0.01$, as a function of the the kernel support
    parameter $\ep$, the steepness of the sigmoid $\beta$, and the time-stepping
    parameter $\tau$, respectively. Parameters as in \eqref{fig:wave}.}
    \label{fig:convergenceTests}
\end{figure}

To test our scheme we study solutions to \eqref{eq:systemNum},
\eqref{eq:TWSigmoidal}, \eqref{eq:TWKernel} with
$L_x, \beta \gg 1$, $L_\xi \gg \sqrt{\nu /\gamma}$,  (the
characteristic electrotonic length), and $\ep \ll 1$.
Since for this problem $[-L_x,L_x)
\cong \mathbb{S}$, we expect to observe at $\xi=\xi_0$
two counter-propagating waves with approximate speed $v$ and 
\[
  V(\pm L_x,\xi_0,t) \approx V_+, \qquad V(0,\xi_0,t) \approx V_-.
\]
We show an exemplary profile of this coherent structure in Figure~\ref{fig:wave},
where we observe two counter-propagating waves, as described above. 

Since the wavespeed $v_*$ is available implicitly, we performed some tests
to validate the proposed algorithm. Firstly, we compute roots of
\eqref{eq:analyticalSpeed} in the variable $v_*$, as a function of the firing rate
threshold $\theta$. In Figure~\ref{fig:convergenceTests}(a) we observe a good agreement with
the wavespeed observed in direct simulations. The latter has been
computed by post-processing data from numerical simulations: using first-order
interpolants we approximate a positive function $x_*(t)$ such that $V(x_*(t),0,t) =
\theta$, that is, the $\theta$-level set of $V(x,0,t)$ on $[0,L_x] \times [0,T]$;
after an initial transient, $\dot x_*(t)$ is approximately constant and provides an
estimate of $v_*$, which is derived via first-order finite differences. In
Figure~\ref{fig:convergenceTests}(a) we observe a small discrepancy, which should be expected as
we have several sources of error, namely: the time-stepping error, the spatial
discretisation error for the differential and integral operators, the error due to
the sigmoidal firing rate and to $\delta_\ep$ (the theory is
valid for Heaviside firing rate and for a Dirac-delta distribution $\delta$). In
Figures~\ref{fig:convergenceTests}(b)--(d) we show convergence plots for these errors (except for
the second-order spatial discretisation error which is dominated in numerical
simulations by the first-order time-stepping error).

\section{Turing instability}\label{sec:TuringTest}
The model defined by (\ref{1}), with an appropriate choice of somatic interaction,
can also support a Turing instability to spatially periodic patterns
\cite{Bressloff96}.  These in turn may either be independent of time or periodic in
time.  In the latter case this leads to periodic travelling waves.  Whether emergent
patterns be \textit{static} or \textit{dynamic} they both provide another validation
test for the numerical scheme presented here, as the bifurcation point as determined
analytically from a Turing analysis should agree with the onset of patterning in a
direct numerical simulation.  A relatively recent description of the method for
determining the Turing instability in a neural field with dendritic processing can be
found in \cite{Coombes2014}.  Here we briefly summarise the necessary steps to arrive
at a formula for the continuous spectrum, from which the Turing instability can be
obtained.

In general a homogeneous steady state solution of (\ref{1}) will only exist if either
$S(0)=0$ or $\int_{\Rb^2} W(x,\xi,y,\eta)\diff y \diff \eta = \text{constant}$
for all $(x,\xi)$.  The latter condition is not generic, and so for the purposes of
this validation exercise we shall work with the choice $S(0)=0$ for which $V=0$ is
the only homogeneous steady state.  Linearising around $V=0$ and using
(\ref{eq:kernel}) gives an evolution equation for the perturbations $\delta
V(x,\xi,t)$ that can be written in the form
\begin{equation}
\delta V(x,\xi,t) = S'(0) \int_{-\infty}^t \Theta(\xi-\xi_0,t-s) \int_{\Rb} 
w(|x-x'|) \delta V(x',0,s) \diff x' \diff s,
\label{deltaV}
\end{equation}
where
\[
  \Theta(\xi, t)=\mathrm{e}^{-\gamma t} \frac{\mathrm{e}^{-\xi^{2} /(4 \nu
  t)}}{\sqrt{4 \pi \nu t}} H(t).
\]
Focusing on a \textit{somatic} field $\delta V(x,0,t)$, we see from (\ref{deltaV})
(with $\xi=0$) that this has solutions of the form $\e^{\lambda t} \e^{i p x}$ for
$\lambda \in \Cb$ and $p \in \Rb$, where $\lambda=\lambda(p)$ is defined by the
implicit solution of $\mathcal{E}(\lambda,p)=0$, where
\begin{equation}
\mathcal{E}(\lambda,p) = 1 - S'(0) \frac{\exp(-\psi(\lambda,\nu) \xi_0 )}{2 \psi(\lambda,\nu) \nu} \widehat{w}(p) .
\end{equation}
Here the function $\psi$ is defined as in (\ref{eq:analyticalSpeed}) and
$\widehat{w}(p)$ is the Fourier transform of $w$:
\begin{equation}
\widehat{w}(p) = \int_{\Rb}  w(|y|) \e^{-i p y} \diff y.
\end{equation}
We note that since $w(x)=w(|x|)$ then $\widehat{w}(p) \in \Rb$.  

\begin{figure}
  \centering
  \includegraphics{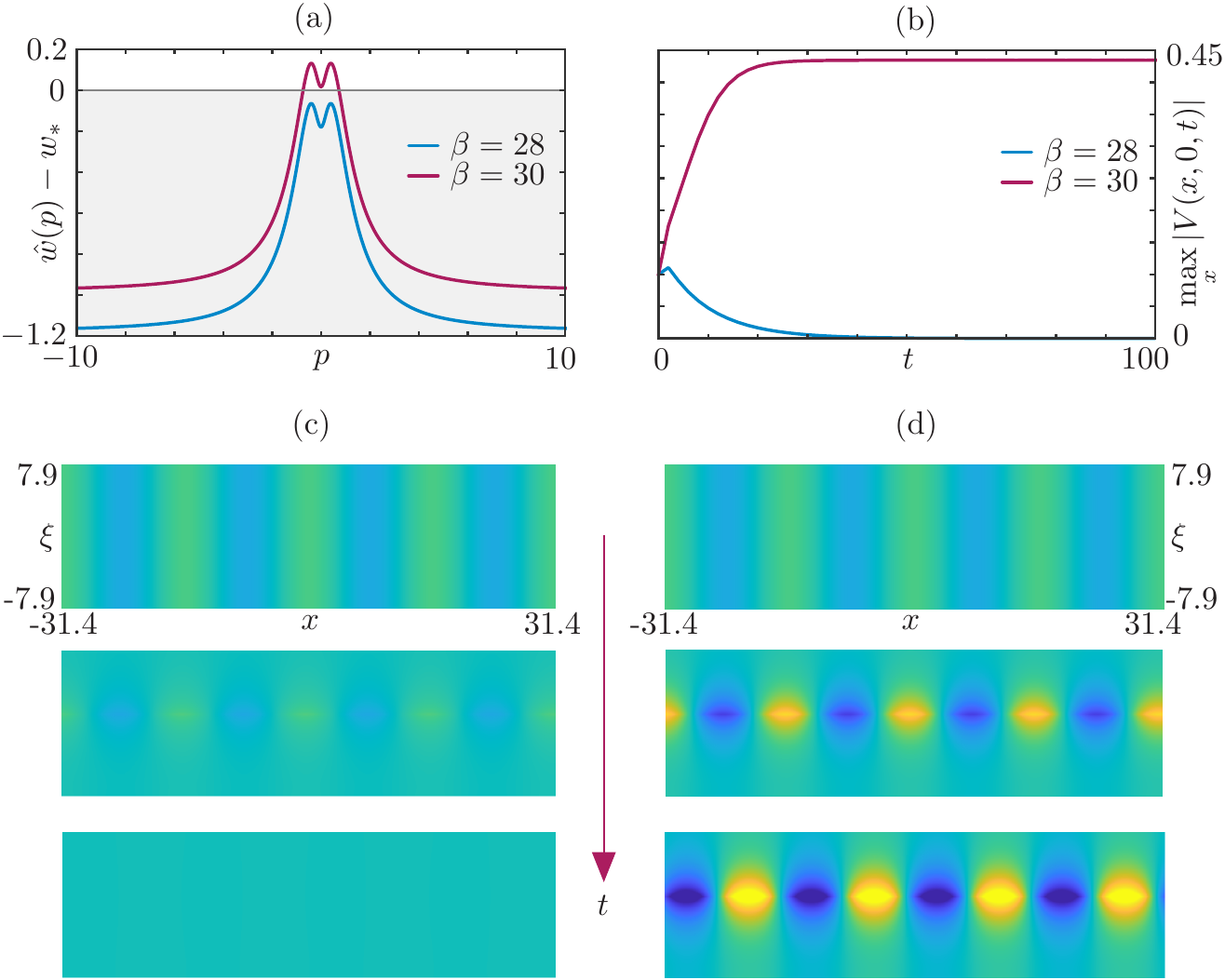}
  \caption{Numerical simulation of Turing bifurcation for the model with kernel and
   firing rate function given in \eqref{eq:mexicanHatKernel}. (a): Plots of $\hat w(p) - w_*$
   for $\beta = 28$ and $\beta=30$, from which we deduce
that a Turing bifurcation occurs for an intermediate value of $\beta$. We expect
perturbations of the trivial state to decay exponentially if $\beta=28$ and to increase
exponentially if $\beta = 30$, as confirmed in panels (b)--(d). (b): Maximum absolute
value of the voltage at $\xi=0$, as function of time, when the trivial steady state is
perturbed. (c),(d): Pseudocolor plot of $V$ when $\beta=28$ and $\beta=30$, respectively.
Parameters: $\xi_0 = 1$, $\ep = 0.005$, $\nu  = 6$, $c = 1$, $a_1 = 1$, $b_1 =
1$, $a_2 = 1/4$, $b_2 = 1/2$, $n_x  = 2^9$, $L_x  = 10\pi$, $n_\xi = 2^{11}$, $L_\xi =
2.5 \pi$, $\tau = 0.01$. }
  \label{fig:TuringStatic}
\end{figure}

Note that if $\lambda \in \Rb$ with $\lambda > -\gamma$ then $\mathcal{E}(\lambda,p) \in
\Rb$.
The trivial steady state is stable to spatially-periodic perturbations if 
\[
  \hat w(p) < w_* = 2\psi(0,\nu) \nu \exp(\psi(0,\nu) \xi_0)/S'(0)  \qquad
  \text{for all $p \in \Rb$}.
\]
Hence a static instability occurs under a parameter variation when $w(p_*) = w_*$ for
some $p_* \in \Rb$, the critical wavelength of the unstable pattern.
Hence if $\widehat{w}(p)$ has a positive peak away from the origin, at $p=p_*$, then a static
Turing instability can occur (see Figure \ref{fig:TuringStatic}(a)). This is possible
if $w(|x|)$ has a Mexican-hat shape, describing short range excitation and long range
inhibition. 

We have validated this scenario numerically, by simulating a neural field with
\begin{equation}\label{eq:mexicanHatKernel}
  w(x) = a_1 \exp(-b_1 |x|) - a_2 \exp(-b_2 |x|), \qquad S(V) = \frac{1}{1 +
  \exp{(-\beta V)}} - \frac{1}{2},
\end{equation}
and reporting results in Figure~\ref{fig:TuringStatic}. In the Figure we pick the
steepness of the sigmoidal firing rate as main parameter, deduce that
a Turing bifurcation occurs for a critical value $\beta_* \in [28,30]$, perturb the
trivial steady state by setting initial condition $V_0(x,\xi) = 0.01 \cos(p_* x)$ and
domain $\bar \Omega = [-4\pi/p_*, 4\pi/p_*] \times [-\pi/p_*,\pi/p_*]$, and observe
the perturbations decaying for $\beta = 28$, and growing for $\beta = 30$, respectively.

Note that, if $\lambda \in \Cb$, then $\mathcal{E}(\lambda,p) \in \Cb$ and it is
possible that a dynamic Turing instability can occur, with an emergent frequency $\omega_c$.  It is
known that this case is more likely for an inverted Mexican-hat shape, describing
short range inhibition and long range excitation \cite{Bressloff96}. We do not show
computations for this case, but we briefly discuss it below. The dynamic
bifurcation condition can be defined by tracking the continuous spectrum at the point
where it first crosses the imaginary axis away from the real line.  This is
equivalent to solving $P_p H_\omega - H_p P_\omega = 0$ with $P(0,\omega_c,p) = 0 =
H(0,\omega_c,p)$, where the subscripts denote partial differentiation and
$P(\nu,\omega,p) = \Real \, \mathcal{E}(\nu + i \omega,p)$ and $H(\nu,\omega,p) =
\Imag \, \mathcal{E}(\nu + i \omega,p)$ \cite[Chapter 1]{Coombes2005}.

\section{Conclusions} \label{sec:conclusions}
In this paper we have presented an efficient scheme for the
numerical solution of neural fields that incorporate dendritic processing. The model
prescribes diffusivity along the dendritic coordinate, but not along the cortex; in
addition, the nonlinear coupling is nonlocal on the cortex, but essentially local on
the dendrites. This structure allows the formulation of a compact numerical scheme,
and provides efficiency savings both in terms of operation counts, and in terms of
the space required by the algorithm. Firstly, a small diffusivity differentiation
matrix is decomposed at the beginning of the computation, and then used repeatedly
to solve a set of linear problems in the cortical direction. This aspect of the
computation is appealing, especially for high-dimensional computations where a 2D
cortex is coupled to the 1D dendritic coordinate, as the decomposition is performed
once, and involves only a 1D differentiation matrix. Secondly, the largest
computational effort of the scheme, which is in the evaluation of the nonlinear term, can be
reduced considerably using DFTs. We
have also provided a basic numerical analysis of the scheme, under the assumption that a
solution to the infinite-dimensional problem exists. The existence of this solution
remains an open problem, which will be addressed in future publications. 

The numerical implementation presented here does not exploit the fact that the
synaptic kernel is localised via the function $\delta_\ep$. If one models the kernel
using a compactly supported function, for instance
\begin{equation}\label{eq:compactDelta}
  \delta_\ep(\xi) = \kappa
  \exp\bigg(-\frac{\xi^2}{\ep^2}\bigg) 1_{(-\ep,\ep)}(\xi), 
\end{equation}
which is supported in a small interval of $O(\ep)$ length, its
evaluation at the grid points is nonzero only on a small index set, namely
\[
  \delta_\ep(\xi_i-\xi_0) =
  \begin{cases}
    \alpha_i & \text{if $i \in \Ib$,} \\
    0        & \text{otherwise,}
  \end{cases}
  \qquad
  \delta_\ep(\xi_{i'}) =
  \begin{cases}
    \alpha'_{i'} & \text{if $i' \in \Ib'$,} \\
    0        & \text{otherwise,} 
  \end{cases}
\]
where $\Ib,\Ib' \subseteq \Nb_{n_\xi}$ are index sets with $O(\ep/L_{n_\xi})$
elements $|\Ib|, |\Ib'| \ll n_\xi$, respectively. This implies
\[
 N_{ij}(V) = \alpha_i \sum_{j' \in \Nb_{n_\xi}} w_{j-j'} \rho_{j'}
  \sum_{i' \in \Ib'} \alpha'_{i'} \sigma_{i'} S(V_{i'j'})
  \qquad
  (i,j) \in \Ib \times \Nb_{n_x},
\]
and we note that only $|\Ib|$ rows of $N$ are nonzero, and the inner sum is only over
$|\Ib'|$ elements. The formula above evaluates the nonlinear term $N$ in just
$(2|\Ib| + |\Ib'|)n_x + O(n_x \log n_x) = O(n_x + n_x\log n_x)$ operations. Numerical
experiments and convergence tests have been performed also for this formula, albeit
the results not presented here, because a synaptic kernel with the choice
\eqref{eq:compactDelta} is no longer in $C^2(\Omega \times \Omega)$, hence
Lemma~\ref{lem:IMEXconvergence} does not hold, and we plan to provide a convergence
result for this case elsewhere. We provide, however, a Matlab implementation of this
code in Appendix~\ref{sec:matlab}.

Possible extensions of the model include curved geometries \cite{Visser:2017hy,Bojak2010}, which should
benefit from a similar strategy used here for the dendritic coordinate, as well as
multiple population models. We expect that the latter will induce different coherent
structures to the ones reported here. The method outlined in this paper is valid also
in the context of numerical bifurcation analysis, which can be employed to study the
bifurcation structure of steady states and travelling waves.

The inclusion of synaptic processing to the present model is straightforward, by
coupling \eqref{eq:systemNum} to an equation of type $Q\Psi =K$ where $Q=
(1+\alpha^{-1} \partial_t)^2$ is a temporal differential operator. The resulting
model would not involve any additional spatial differential or integral
operator, therefore the proposed scheme can be applied by simply augmenting the
discretised state variables.
%

It is also important to address the role of axonal delays on the generation of large
scale brain rhythms.  A recent paper \cite{Ross2019} has explored how this might be
done in a purely PDE setting, generalising the Nunez brain-wave equation to include
dendrites. A natural extension of the work in this paper is to consider a more
general numerical treatment of a model with both dendritic processing and
space-dependent axonal delays in an integro-differential framework.

\section*{Acknowledgments} P.M. Lima acknowledges support from Funda\c c\~ao para a
Ci\^encia e a Tecnologia (the Portuguese Foundation for Science and Technology)
through the  grants POCI-01-0145-FEDER-031393 and UIDB/04621/2020.


\newpage
\appendix
\section{Matlab implementation}\label{sec:matlab}
\phantom{In this section we provide a listing of the code used}
\lstinputlisting{Codes/timeStep.m}

\bibliographystyle{siamplain}
\bibliography{references}
\end{document}